%------------------------------------------------------------------------------
% Beginning of journal.tex
%------------------------------------------------------------------------------
%
% AMS-LaTeX version 2 sample file for journals, based on amsart.cls.
%
%        ***     DO NOT USE THIS FILE AS A STARTER.      ***
%        ***  USE THE JOURNAL-SPECIFIC *.TEMPLATE FILE.  ***
%
% Replace amsart by the documentclass for the target journal, e.g., tran-l.
%
\documentclass{amsart}

\newtheorem{theorem}{Theorem}[section]
\newtheorem{lemma}[theorem]{Lemma}

\theoremstyle{definition}

\newtheorem{example}[theorem]{Example}

\theoremstyle{remark}
\newtheorem{remark}[theorem]{Remark}

%%%%%%%%%%%%%%%%%%%%%%%%%%%%%%%%%%%%%%%%%%%%%%%%%%%%%%%%%%%%%%%%%%%% added packages
\usepackage{amsmath}
\usepackage{graphicx}
\usepackage{mathrsfs}
\usepackage{algorithm}
\usepackage{algorithmicx}
\usepackage{appendix}
\usepackage{booktabs}
\usepackage{multirow}
\usepackage[colorlinks, linkcolor=blue, anchorcolor=blue, citecolor=blue]{hyperref}
\AtBeginDocument{
                 \label{CorrectFirstPageLabel}
                 
                }

\DeclareMathOperator*{\diag}{diag}
\DeclareMathOperator*{\re}{Re}
\DeclareMathOperator*{\tr}{tr}
\DeclareMathOperator*{\Retr}{Re\,tr}
%%%%%%%%%%%%%%%%%%%%%%%%%%%%%%%%%%%%%%%%%%%%%%%%%%%%%%%%%%%%%%%%%%%%%%%%%%%%%%%%%%

\numberwithin{equation}{section}

%    Absolute value notation

%    Blank box placeholder for figures (to avoid requiring any
%    particular graphics capabilities for printing this document).

\begin{document}

\title[Spectral variation of a general matrix]{New upper bounds for the spectral variation of a general matrix}

%    Information for first author
\author{Xuefeng Xu}
%    Address of record for the research reported here
\address{Department of Mathematics, Purdue University, West Lafayette, IN 47907, USA}
\email{xuxuefeng@lsec.cc.ac.cn; xu1412@purdue.edu}
%    \thanks will become a 1st page footnote.
%\thanks{}

%    Information for second author
%\author{}
%\address{}
%\email{}
%\thanks{}

%    General info
\subjclass[2010]{15A18, 47A55, 65F15}

%\date{January 1, 2001 and, in revised form, June 22, 2001.}

%\dedicatory{This paper is dedicated to our advisors.}

\keywords{Hoffman--Wielandt theorem, spectral variation, perturbation, upper bound}

\begin{abstract}
Let $A\in\mathbb{C}^{n\times n}$ be a normal matrix with spectrum $\{\lambda_{i}\}_{i=1}^{n}$, and let $\widetilde{A}=A+E\in\mathbb{C}^{n\times n}$ be a perturbed matrix with spectrum $\{\widetilde{\lambda}_{i}\}_{i=1}^{n}$. If $\widetilde{A}$ is still normal, the celebrated Hoffman--Wielandt theorem states that there exists a permutation $\pi$ of $\{1,\ldots,n\}$ such that $\big(\sum_{i=1}^{n}|\widetilde{\lambda}_{\pi(i)}-\lambda_{i}|^{2}\big)^{1/2}\leq\|E\|_{F}$, where $\|\cdot\|_{F}$ denotes the Frobenius norm of a matrix. This theorem reveals the strong stability of the spectrum of a normal matrix. However, if $A$ or $\widetilde{A}$ is non-normal, the Hoffman--Wielandt theorem does not hold in general. In this paper, we present new upper bounds for $\big(\sum_{i=1}^{n}|\widetilde{\lambda}_{\pi(i)}-\lambda_{i}|^{2}\big)^{1/2}$, provided that both $A$ and $\widetilde{A}$ are general matrices. Some of our estimates improve or generalize the existing ones.
\end{abstract}

\maketitle

%\section*{This is an unnumbered first-level section head}
%This is an example of an unnumbered first-level heading.

%% The correct journal style for \specialsection is all uppercase; a known bug
%% in amsart.cls prevents this, so input must be uppercase until it is fixed.
%\specialsection*{This is a Special Section Head}
%\specialsection*{THIS IS A SPECIAL SECTION HEAD}
%This is an example of a special section head%
%%%%%%%%%%%%%%%%%%%%%%%%%%%%%%%%%%%%%%%%%%%%%%%%%%%%%%%%%%%%%%%%%%%%%%%%
%\footnote{Here is an example of a footnote. Notice that this footnote
%text is running on so that it can stand as an example of how a footnote
%with separate paragraphs should be written.
%\par
%And here is the beginning of the second paragraph.}%
%%%%%%%%%%%%%%%%%%%%%%%%%%%%%%%%%%%%%%%%%%%%%%%%%%%%%%%%%%%%%%%%%%%%%%%%

\section{Introduction}

Let $\mathbb{C}^{m\times n}$ be the set of all $m\times n$ complex matrices, and let $I_{n}$ be the $n\times n$ identity matrix. For any $X\in\mathbb{C}^{m\times n}$, the symbols $X^{\ast}$, $\|X\|_{2}$, and $\|X\|_{F}$ denote the conjugate transpose, the spectral norm, and the Frobenius norm of $X$, respectively. For any $Y\in\mathbb{C}^{n\times n}$, $\tr(Y)$, $\mathcal{D}(Y)$, $\mathcal{L}(Y)$, and $\mathcal{U}(Y)$ stand for its trace, diagonal part, strictly lower triangular part, and strictly upper triangular part, respectively. Furthermore, we define
\begin{equation}\label{delta}
\delta(Y):=\bigg(\|Y\|_{F}^{2}-\frac{1}{n}|\tr(Y)|^{2}\bigg)^{\frac{1}{2}}.
\end{equation}
Obviously, $\delta(Y)\leq\|Y\|_{F}$, and $\delta(Y)=\|Y\|_{F}$ if and only if $\tr(Y)=0$.

Let $A\in\mathbb{C}^{n\times n}$ and $\widetilde{A}=A+E\in\mathbb{C}^{n\times n}$ have the spectra $\{\lambda_{i}\}_{i=1}^{n}$ and $\{\widetilde{\lambda}_{i}\}_{i=1}^{n}$, respectively. For any permutation $\pi$ of $\{1,\ldots,n\}$, we define
\begin{equation}\label{D2}
\mathbb{D}_{2}:=\Bigg(\sum_{i=1}^{n}\big|\widetilde{\lambda}_{\pi(i)}-\lambda_{i}\big|^{2}\Bigg)^{\frac{1}{2}}.
\end{equation}
If $A$ and $\widetilde{A}$ are normal matrices, Hoffman and Wielandt~\cite{Hoffman1953} proved that there exists a permutation $\pi$ of $\{1,\ldots,n\}$ such that
\begin{equation}\label{HW}
\mathbb{D}_{2}\leq\|E\|_{F}.
\end{equation}
This is the well-known \textit{Hoffman--Wielandt theorem}, which reveals the strong stability of the spectrum of a normal matrix. However, the inequality~\eqref{HW} may fail when $A$ or $\widetilde{A}$ is non-normal. Over the past decades, various extensions or analogues of the Hoffman--Wielandt theorem have been developed by many researchers; see, e.g.,~\cite{Elsner1995,Sun1996,Bhatia1997,Eisenstat1998,Ipsen1998,Li1998,Song2002,Li2005,Li2006,Bhatia2007,Li2017,Xu2017}.

If $A\in\mathbb{C}^{n\times n}$ is normal and $\widetilde{A}=A+E\in\mathbb{C}^{n\times n}$ is non-normal, Sun~\cite[Theorem~1.1]{Sun1996} showed that
\begin{equation}\label{Sun}
\mathbb{D}_{2}\leq\sqrt{n}\|E\|_{F}.
\end{equation}
Recently, Xu and Zhang~\cite[Theorem~3.6]{Xu2017} derived that
\begin{equation}\label{XXF}
\mathbb{D}_{2}\leq\sqrt{\|E\|_{F}^{2}+(n-1)\delta(E)^{2}},
\end{equation}
which improved the estimate~\eqref{Sun} due to $\delta(E)\leq\|E\|_{F}$. Nevertheless, the estimates~\eqref{HW}--\eqref{XXF} may be invalid for a general matrix $A$. As is well known, for any $A\in\mathbb{C}^{n\times n}$, there is a nonsingular matrix $Q\in\mathbb{C}^{n\times n}$ such that
\begin{displaymath}
Q^{-1}AQ=\diag\big(J_{1},\ldots,J_{p}\big),
\end{displaymath}
where each $J_{i}\in\mathbb{C}^{m_{i}\times m_{i}}$ ($\sum_{i=1}^{p}m_{i}=n$) is a Jordan block. Let
\begin{displaymath}
m=\max_{1\leq i\leq p}m_{i} \quad \text{and} \quad E_{Q}=Q^{-1}EQ.
\end{displaymath}
It was proved by Song~\cite[Theorem~2.1]{Song2002} that
\begin{equation}\label{Song}
\mathbb{D}_{2}\leq
\begin{cases}
\sqrt{n}\big(\sqrt{n-p}+1\big)\|E_{Q}\|_{F}^{\frac{1}{m}}, &\text{if $\|E_{Q}\|_{F}<1$},\\
\sqrt{n}\big(\sqrt{n-p}+1\big)\|E_{Q}\|_{F}, &\text{if $\|E_{Q}\|_{F}\geq 1$}.
\end{cases}
\end{equation}

In this paper, we establish some new upper bounds for the spectral variation of a general matrix. One of our main results is
\begin{equation}\label{new}
\mathbb{D}_{2}\leq
\begin{cases}
\sqrt{n\Big(n-p+2\sqrt{n-p}\,\delta(E_{Q})+\frac{\delta(E_{Q})^{2}}{\|E_{Q}\|_{F}^{2}}\Big)\|E_{Q}\|_{F}^{\frac{2}{m}}+\frac{1}{n}|\tr(E)|^{2}}, &\text{if $\|E_{Q}\|_{F}<1$},\\
\sqrt{n\big(\sqrt{n-p}+\delta(E_{Q})\big)^{2}+\frac{1}{n}|\tr(E)|^{2}}, &\text{if $\|E_{Q}\|_{F}\geq 1$}.
\end{cases}
\end{equation}
In view of~\eqref{delta}, $\delta(E_{Q})$ involved in~\eqref{new} is $\delta(E_{Q})=\big(\|E_{Q}\|_{F}^{2}-\frac{1}{n}|\tr(E)|^{2}\big)^{\frac{1}{2}}$. Theoretical analysis shows that the new estimate~\eqref{new} is sharper than~\eqref{Song} (see Remark~\ref{comp} for details). Moreover, it is easy to check that~\eqref{new} will reduce to~\eqref{XXF} if $A$ is a normal matrix. That is, the new estimate~\eqref{new} also generalizes the existing one~\eqref{XXF}.

The rest of this paper is organized as follows. In Section~\ref{sec:pre}, we introduce several auxiliary estimates, which play an important role in our analysis. In Section~\ref{sec:main}, we present new upper bounds for the spectral variation of a general matrix.

\section{Preliminaries} \label{sec:pre}

For any square matrix $M$, the first lemma provides an upper bound for $\|\mathcal{L}(M)\|_{F}^{2}+\|\mathcal{U}(M)\|_{F}^{2}$~\cite[Lemma~3.1]{Xu2017}.

\begin{lemma}
Let $M$ be a square matrix. Then
\begin{equation}\label{LU}
\|\mathcal{L}(M)\|_{F}^{2}+\|\mathcal{U}(M)\|_{F}^{2}\leq\delta(M)^{2},
\end{equation}
where $\delta(\cdot)$ is defined by~\eqref{delta}.
\end{lemma}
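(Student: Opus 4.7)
My plan is to reduce the inequality to a one-line Cauchy--Schwarz estimate on the diagonal entries of $M$.

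First I would use the orthogonal decomposition of the Frobenius norm with respect to the splitting $M=\mathcal{D}(M)+\mathcal{L}(M)+\mathcal{U}(M)$. Since $\mathcal{D}(M)$, $\mathcal{L}(M)$, and $\mathcal{U}(M)$ have disjoint supports as matrix entries, the Frobenius norm squared splits as
\begin{equation*}
\|M\|_{F}^{2}=\|\mathcal{D}(M)\|_{F}^{2}+\|\mathcal{L}(M)\|_{F}^{2}+\|\mathcal{U}(M)\|_{F}^{2}.
\end{equation*}
Consequently, recalling the definition $\delta(M)^{2}=\|M\|_{F}^{2}-\tfrac{1}{n}|\tr(M)|^{2}$, the desired bound \eqref{LU} is equivalent to
\begin{equation*}
\tfrac{1}{n}|\tr(M)|^{2}\leq\|\mathcal{D}(M)\|_{F}^{2}.
\end{equation*}

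Next I would verify this last inequality by a direct application of the Cauchy--Schwarz inequality to the diagonal entries. Writing $d_{1},\ldots,d_{n}$ for the diagonal entries of $M$, we have $\tr(M)=\sum_{i=1}^{n}d_{i}$ and $\|\mathcal{D}(M)\|_{F}^{2}=\sum_{i=1}^{n}|d_{i}|^{2}$, so that
\begin{equation*}
|\tr(M)|^{2}=\bigg|\sum_{i=1}^{n}d_{i}\bigg|^{2}\leq n\sum_{i=1}^{n}|d_{i}|^{2}=n\|\mathcal{D}(M)\|_{F}^{2},
\end{equation*}
which is exactly the required reformulation. Combining the two displays yields \eqref{LU}.

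There is no real obstacle here: the argument is essentially a bookkeeping step (orthogonal decomposition) followed by Cauchy--Schwarz. The only mild subtlety worth noting is the remark on unitary invariance mentioned before the lemma, namely that both sides of \eqref{LU} are invariant under unitary similarity of $M$; this is \emph{not} needed for the proof itself, but it is consistent with the bound and will be useful when the lemma is invoked later in the paper.
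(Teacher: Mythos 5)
Your proof is correct. The paper does not reprove this lemma (it is quoted from the cited reference), but your argument---splitting $\|M\|_F^2$ into diagonal, strictly lower, and strictly upper parts and then applying Cauchy--Schwarz to get $\frac{1}{n}|\tr(M)|^2\leq\|\mathcal{D}(M)\|_F^2$---is exactly the standard one, and indeed the same inequality chain $\sum_i|m_{ii}|^2\geq\frac{1}{n}\big(\sum_i|m_{ii}|\big)^2\geq\frac{1}{n}\big|\sum_i m_{ii}\big|^2$ appears verbatim in the paper's proof of the next lemma. Nothing is missing.
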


The following lemma gives an upper bound for the spectral variation of a normal matrix~\cite[Theorem~3.6]{Xu2017}, which plays a key role in the subsequent analysis.

\begin{lemma}\label{lem:normal}
Let $A\in\mathbb{C}^{n\times n}$ be a normal matrix with spectrum $\{\lambda_{i}\}_{i=1}^{n}$, and let $\widetilde{A}=A+E\in\mathbb{C}^{n\times n}$ be a perturbed matrix with spectrum $\{\widetilde{\lambda}_{i}\}_{i=1}^{n}$. Then there exists a permutation $\pi$ of $\{1,\dots,n\}$ such that
\begin{equation}\label{Xu1}
\mathbb{D}_{2}\leq\sqrt{\|E\|_{F}^{2}+(n-1)\delta(E)^{2}},
\end{equation}
where $\delta(\cdot)$ is defined by~\eqref{delta}.
\end{lemma}

For any $A\in\mathbb{C}^{n\times n}$, it can be factorized as
\begin{equation}\label{Jordan}
A=Q\diag\big(J_{1},\ldots,J_{p}\big)Q^{-1},
\end{equation}
where $Q\in\mathbb{C}^{n\times n}$ is nonsingular, and each $J_{i}\in\mathbb{C}^{m_{i}\times m_{i}}$ $(\sum_{i=1}^{p}m_{i}=n)$ is a Jordan block with the form
\begin{displaymath}
J_{i}=\begin{pmatrix}
\lambda_{i} & 1 & 0 & \cdots & 0 \\
0 & \lambda_{i} & 1 & \cdots & 0 \\
\vdots & \vdots & \ddots & \ddots & \vdots \\
0 & 0 & \cdots &\lambda_{i} & 1 \\
0 & 0 & \cdots & 0 & \lambda_{i} \\
\end{pmatrix}.
\end{displaymath}
Let $0<\varepsilon\leq1$ be a parameter, and let
\begin{displaymath}
T=\diag\big(T_{1},\ldots,T_{p}\big),
\end{displaymath}
where $T_{i}=\diag\big(1,\varepsilon,\ldots,\varepsilon^{m_{i}-1}\big)$ for all $i=1,\ldots,p$. Then
\begin{equation}\label{decom}
T^{-1}Q^{-1}AQT=\diag\big(T_{1}^{-1}J_{1}T_{1},\ldots,T_{p}^{-1}J_{p}T_{p}\big)=\Lambda+\Omega,
\end{equation}
where $\Lambda=\diag\big(\lambda_{1}I_{m_{1}},\ldots,\lambda_{p}I_{m_{p}}\big)$, and $\Omega=\diag\big(\Omega_{1},\dots,\Omega_{p}\big)$ with
\begin{displaymath}
\Omega_{i}=\begin{pmatrix}
0 & \varepsilon & 0 & \cdots & 0 \\
0 & 0 & \varepsilon & \cdots & 0 \\
\vdots & \vdots & \ddots & \ddots & \vdots \\
0 & 0 & \cdots & 0 & \varepsilon \\
0 & 0 & \cdots & 0 & 0 \\
\end{pmatrix}\in\mathbb{C}^{m_{i}\times m_{i}}.
\end{displaymath}

We are now in a position to present the fundamental estimate of this paper.

\begin{lemma}
Let $A\in\mathbb{C}^{n\times n}$ be factorized as in~\eqref{Jordan}, and let $\widetilde{A}=A+E\in\mathbb{C}^{n\times n}$ be a perturbed matrix. Let
\begin{displaymath}
\Lambda=\diag\big(\lambda_{1}I_{m_{1}},\ldots,\lambda_{p}I_{m_{p}}\big) \quad \text{and} \quad T=\diag\big(T_{1},\ldots,T_{p}\big),
\end{displaymath}
where $T_{i}=\diag\big(1,\varepsilon,\ldots,\varepsilon^{m_{i}-1}\big)$ with $0<\varepsilon\leq 1$. Then, it holds that
\begin{equation}\label{main-est}
\|T^{-1}Q^{-1}\widetilde{A}QT-\Lambda\|_{F}^{2}\leq\mathscr{V}(\varepsilon),
\end{equation}
where
\begin{displaymath}
\mathscr{V}(\varepsilon):=\varepsilon^{2(1-m)}\delta(E_{Q})^{2}+2\varepsilon^{2}\sqrt{n-p}\,\delta(E_{Q})+(n-p)\varepsilon^{2}+\frac{1}{n}|\tr(E)|^{2}
\end{displaymath}
with $m=\max\limits_{1\leq i\leq p}m_{i}$ and $E_{Q}=Q^{-1}EQ$.
\end{lemma}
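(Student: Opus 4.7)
The plan is to combine equation~\eqref{decom} with the splitting $\widetilde{A} = A + E$ to obtain the identity $T^{-1}Q^{-1}\widetilde{A}QT - \Lambda = \Omega + T^{-1}E_{Q}T$, and then to expand the squared Frobenius norm as
\[
\|\Omega\|_{F}^{2} + 2\Retr\bigl(\Omega^{*}T^{-1}E_{Q}T\bigr) + \|T^{-1}E_{Q}T\|_{F}^{2},
\]
bounding each summand against the corresponding piece of $\Phi(\varepsilon)$. The first term is immediate: $\Omega$ has exactly $n-p$ nonzero entries (the within-block superdiagonals of the $p$ Jordan blocks), each equal to $\varepsilon$, so $\|\Omega\|_{F}^{2} = (n-p)\varepsilon^{2}$.

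For the cross term, note that $\Omega^{*}$ is supported only at positions $(j+1, j)$ with $j$ and $j+1$ belonging to the same Jordan block, where its value is $\varepsilon$. Writing $E_{Q} = (e_{ij})$ and observing that at these positions $(T^{-1}E_{Q}T)_{j, j+1} = (\tau_{j+1}/\tau_{j})\,e_{j, j+1} = \varepsilon\,e_{j, j+1}$ (since $\tau_{j+1} = \varepsilon\tau_{j}$ inside a block), the trace collapses to $\varepsilon^{2}\sum e_{j, j+1}$ summed over the $n-p$ in-block superdiagonal positions. The Cauchy--Schwarz inequality together with Lemma~2.1 (which yields $\|\mathcal{U}(E_{Q})\|_{F} \leq \delta(E_{Q})$) then produces $|\Retr(\Omega^{*}T^{-1}E_{Q}T)| \leq \varepsilon^{2}\sqrt{n-p}\,\delta(E_{Q})$, matching the middle term of $\Phi(\varepsilon)$ after doubling.

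The main work goes into bounding $\|T^{-1}E_{Q}T\|_{F}^{2}$. Since every diagonal entry of $T$ lies in $\{1, \varepsilon, \ldots, \varepsilon^{m-1}\}$ with $0 < \varepsilon \leq 1$, every off-diagonal magnification obeys $|\tau_{j}/\tau_{i}|^{2} \leq \varepsilon^{-2(m-1)} = \varepsilon^{2(1-m)}$, while the diagonal entries are untouched; hence
\[
\|T^{-1}E_{Q}T\|_{F}^{2} \leq \sum_{i}|e_{ii}|^{2} + \varepsilon^{2(1-m)}\sum_{i \neq j}|e_{ij}|^{2}.
\]
Using $\tr(E_{Q}) = \tr(E)$ and the definition of $\delta$, we have $\sum_{i \neq j}|e_{ij}|^{2} = \delta(E_{Q})^{2} + \frac{1}{n}|\tr(E)|^{2} - \sum_{i}|e_{ii}|^{2}$; substituting rewrites the right-hand side as $\varepsilon^{2(1-m)}\delta(E_{Q})^{2} + \frac{1}{n}|\tr(E)|^{2} - (\varepsilon^{2(1-m)}-1)\bigl(\sum_{i}|e_{ii}|^{2} - \frac{1}{n}|\tr(E)|^{2}\bigr)$. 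Since $\varepsilon^{2(1-m)} \geq 1$ and Cauchy--Schwarz gives $\sum_{i}|e_{ii}|^{2} \geq \frac{1}{n}|\tr(E)|^{2}$, the parenthetical correction is nonpositive, and we conclude that $\|T^{-1}E_{Q}T\|_{F}^{2} \leq \varepsilon^{2(1-m)}\delta(E_{Q})^{2} + \frac{1}{n}|\tr(E)|^{2}$. Summing the three bounds yields $\Phi(\varepsilon)$ exactly. The chief subtlety is precisely this last manipulation: a naive estimate $\|T^{-1}E_{Q}T\|_{F}^{2} \leq \varepsilon^{2(1-m)}\|E_{Q}\|_{F}^{2}$ would lose the refined $\frac{1}{n}|\tr(E)|^{2}$ term, so the diagonal of $E_{Q}$ must be separated from the off-diagonal and recombined via Cauchy--Schwarz on the trace.
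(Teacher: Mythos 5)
Your proposal is correct and follows essentially the same route as the paper's proof: the identity $T^{-1}Q^{-1}\widetilde{A}QT-\Lambda=\Omega+T^{-1}E_{Q}T$, the three-term expansion of the squared Frobenius norm, the Cauchy--Schwarz plus Lemma~2.1 treatment of the cross term, and the separation of the diagonal of $E_{Q}$ combined with $\sum_{i}|e_{ii}|^{2}\geq\frac{1}{n}|\tr(E)|^{2}$ to retain the refined $\frac{1}{n}|\tr(E)|^{2}$ term. The only cosmetic difference is that you bound every off-diagonal magnification factor uniformly by $\varepsilon^{2(1-m)}$ at once, whereas the paper first distinguishes the upper- and lower-triangular parts of the diagonal blocks before collapsing to the same intermediate inequality.
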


\begin{proof}
From~\eqref{decom}, we have
\begin{displaymath}
T^{-1}Q^{-1}\widetilde{A}QT-\Lambda=T^{-1}E_{Q}T+\Omega,
\end{displaymath}
which yields
\begin{equation}\label{rela1}
\|T^{-1}Q^{-1}\widetilde{A}QT-\Lambda\|_{F}^{2}=\|T^{-1}E_{Q}T\|_{F}^{2}+2\Retr(\Omega^{\ast}T^{-1}E_{Q}T)+\|\Omega\|_{F}^{2}.
\end{equation}
In what follows, we establish the upper bounds for $\|T^{-1}E_{Q}T\|_{F}^{2}$, $\Retr(\Omega^{\ast}T^{-1}E_{Q}T)$, and $\|\Omega\|_{F}^{2}$.

(i) Partitioning $E_{Q}$ into the block form $E_{Q}=(\widehat{E}_{ij})_{p\times p}$ with $\widehat{E}_{ij}\in\mathbb{C}^{m_{i}\times m_{j}}$, we have
\begin{displaymath}
\|T^{-1}E_{Q}T\|_{F}^{2}=\sum_{i=1}^{p}\sum_{j=1}^{p}\|T_{i}^{-1}\widehat{E}_{ij}T_{j}\|_{F}^{2}.
\end{displaymath}
Hence,
\begin{align*}
\|T^{-1}E_{Q}T\|_{F}^{2}&=\sum_{i=1}^{p}\sum_{j=1}^{p}\sum_{k=1}^{m_{i}}\sum_{\ell=1}^{m_{j}}\varepsilon^{2(\ell-k)}|(\widehat{E}_{ij})_{k,\ell}|^{2}\\
&\leq\varepsilon^{2(1-m)}\sum_{i\neq j}\|\widehat{E}_{ij}\|_{F}^{2}+\sum_{i=1}^{p}\big(\|\mathcal{D}(\widehat{E}_{ii})\|_{F}^{2}+\varepsilon^{2}\|\mathcal{U}(\widehat{E}_{ii})\|_{F}^{2}+\varepsilon^{2(1-m_{i})}\|\mathcal{L}(\widehat{E}_{ii})\|_{F}^{2}\big)\\
&\leq\varepsilon^{2(1-m)}\Bigg(\sum_{i\neq j}\|\widehat{E}_{ij}\|_{F}^{2}+\sum_{i=1}^{p}\|\mathcal{U}(\widehat{E}_{ii})\|_{F}^{2}+\sum_{i=1}^{p}\|\mathcal{L}(\widehat{E}_{ii})\|_{F}^{2}\Bigg)+\|\mathcal{D}(E_{Q})\|_{F}^{2}\\
&=\varepsilon^{2(1-m)}\big(\|E_{Q}\|_{F}^{2}-\|\mathcal{D}(E_{Q})\|_{F}^{2}\big)+\|\mathcal{D}(E_{Q})\|_{F}^{2}\\
&=\varepsilon^{2(1-m)}\|E_{Q}\|_{F}^{2}-\big(\varepsilon^{2(1-m)}-1\big)\|\mathcal{D}(E_{Q})\|_{F}^{2}.
\end{align*}
Note that
\begin{displaymath}
\|\mathcal{D}(E_{Q})\|_{F}^{2}\geq\frac{1}{n}|\tr(E)|^{2}.
\end{displaymath}
Thus,
\begin{equation}\label{rela2}
\|T^{-1}E_{Q}T\|_{F}^{2}\leq\varepsilon^{2(1-m)}\delta(E_{Q})^{2}+\frac{1}{n}|\tr(E)|^{2}.
\end{equation}

(ii) It is easy to see that
\begin{displaymath}
\Retr(\Omega^{\ast}T^{-1}E_{Q}T)=\re\sum_{i=1}^{p}\tr(\Omega_{i}^{\ast}T_{i}^{-1}\widehat{E}_{ii}T_{i})=\re\sum_{i=1}^{p}\sum_{j=2}^{m_{i}}\varepsilon(T_{i}^{-1}\widehat{E}_{ii}T_{i})_{j-1,j}.
\end{displaymath}
Due to $(T_{i}^{-1}\widehat{E}_{ii}T_{i})_{j-1,j}=\varepsilon(\widehat{E}_{ii})_{j-1,j}$, it follows that
\begin{align*}
\Retr(\Omega^{\ast}T^{-1}E_{Q}T)&=\re\sum_{i=1}^{p}\sum_{j=2}^{m_{i}}\varepsilon^{2}(\widehat{E}_{ii})_{j-1,j}\\
&\leq\varepsilon^{2}\sum_{i=1}^{p}\sum_{j=2}^{m_{i}}|(\widehat{E}_{ii})_{j-1,j}|\\
&\leq\varepsilon^{2}\sqrt{n-p}\Bigg(\sum_{i=1}^{p}\sum_{j=2}^{m_{i}}|(\widehat{E}_{ii})_{j-1,j}|^{2}\Bigg)^{\frac{1}{2}}\\
&\leq\varepsilon^{2}\sqrt{n-p}\Bigg(\sum_{i=1}^{p}\|\mathcal{U}(\widehat{E}_{ii})\|_{F}^{2}\Bigg)^{\frac{1}{2}}\\
&\leq\varepsilon^{2}\sqrt{n-p}\,\|\mathcal{U}(E_{Q})\|_{F}.
\end{align*}
Since $\|\mathcal{U}(E_{Q})\|_{F}\leq \delta(E_{Q})$ (see~\eqref{LU}), we obtain
\begin{equation}\label{rela3}
\Retr(\Omega^{\ast}T^{-1}E_{Q}T)\leq\varepsilon^{2}\sqrt{n-p}\,\delta(E_{Q}).
\end{equation}

(iii) In addition, we have
\begin{equation}\label{rela4}
\|\Omega\|_{F}^{2}=(n-p)\varepsilon^{2}.
\end{equation}
Combining~\eqref{rela1}--\eqref{rela4}, we can arrive at the estimate~\eqref{main-est}.
\end{proof}

\section{Main results} \label{sec:main}

In light of~\eqref{Xu1} and~\eqref{main-est}, we can derive the following estimate.

\begin{theorem}\label{main1.1}
Let $A\in\mathbb{C}^{n\times n}$ have the factorization~\eqref{Jordan}, and let $\widetilde{A}=A+E$, where $E\in\mathbb{C}^{n\times n}$ is a perturbation. Let $\{\lambda_{i}\}_{i=1}^{n}$ and $\{\widetilde{\lambda}_{i}\}_{i=1}^{n}$ be the spectra of $A$ and $\widetilde{A}$, respectively. Then there exists a permutation $\pi$ of $\{1,\ldots,n\}$ such that
\begin{equation}\label{up1.1}
\mathbb{D}_{2}\leq
\begin{cases}
\sqrt{n\Big(n-p+2\sqrt{n-p}\,\delta(E_{Q})+\frac{\delta(E_{Q})^{2}}{\|E_{Q}\|_{F}^{2}}\Big)\|E_{Q}\|_{F}^{\frac{2}{m}}+\frac{1}{n}|\tr(E)|^{2}}, &\text{if $\|E_{Q}\|_{F}<1$},\\
\sqrt{n\big(\sqrt{n-p}+\delta(E_{Q})\big)^{2}+\frac{1}{n}|\tr(E)|^{2}}, &\text{if $\|E_{Q}\|_{F}\geq 1$},
\end{cases}
\end{equation}
where $m=\max\limits_{1\leq i\leq p}m_{i}$ and $E_{Q}=Q^{-1}EQ$.
\end{theorem}

\begin{proof}
Observe that $\Lambda$ is a normal matrix with spectrum $\{\lambda_{i}\}_{i=1}^{n}$, and the spectrum of $T^{-1}Q^{-1}\widetilde{A}QT$ is $\{\widetilde{\lambda}_{i}\}_{i=1}^{n}$. Applying Lemma~\ref{lem:normal} to $\Lambda$ and $T^{-1}Q^{-1}\widetilde{A}QT$ yields
\begin{displaymath}
\mathbb{D}_{2}\leq\sqrt{n\|T^{-1}Q^{-1}\widetilde{A}QT-\Lambda\|_{F}^{2}-\frac{n-1}{n}|\tr(E)|^{2}}\leq\sqrt{n\mathscr{V}(\varepsilon)-\frac{n-1}{n}|\tr(E)|^{2}},
\end{displaymath}
where we have used the estimate~\eqref{main-est}. Take
\begin{displaymath}
\varepsilon=\begin{cases}
\|E_{Q}\|_{F}^{\frac{1}{m}}, &\text{if $\|E_{Q}\|_{F}<1$},\\
1, &\text{if $\|E_{Q}\|_{F}\geq 1$}.
\end{cases}
\end{displaymath}
Direct calculations yield
\begin{align*}
&\mathscr{V}\Big(\|E_{Q}\|_{F}^{\frac{1}{m}}\Big)=\bigg(n-p+2\sqrt{n-p}\,\delta(E_{Q})+\frac{\delta(E_{Q})^{2}}{\|E_{Q}\|_{F}^{2}}\bigg)\|E_{Q}\|_{F}^{\frac{2}{m}}+\frac{1}{n}|\tr(E)|^{2},\\
&\mathscr{V}(1)=\big(\sqrt{n-p}+\delta(E_{Q})\big)^{2}+\frac{1}{n}|\tr(E)|^{2}.
\end{align*}
Thus, the estimate~\eqref{up1.1} is valid.
\end{proof}

\begin{remark}\label{comp}\rm
If $\|E_{Q}\|_{F}<1$, then~\eqref{up1.1} reads
\begin{displaymath}
\mathbb{D}_{2}\leq\sqrt{n\bigg(n-p+2\sqrt{n-p}\,\delta(E_{Q})+\frac{\delta(E_{Q})^{2}}{\|E_{Q}\|_{F}^{2}}\bigg)\|E_{Q}\|_{F}^{\frac{2}{m}}+\frac{1}{n}|\tr(E)|^{2}}.
\end{displaymath}
Due to
\begin{displaymath}
n\frac{\delta(E_{Q})^{2}}{\|E_{Q}\|_{F}^{2}}\|E_{Q}\|_{F}^{\frac{2}{m}}+\frac{1}{n}|\tr(E)|^{2}=\frac{n\|E_{Q}\|_{F}^{2}-|\tr(E)|^{2}}{\|E_{Q}\|_{F}^{2}}\|E_{Q}\|_{F}^{\frac{2}{m}}+\frac{1}{n}|\tr(E)|^{2}\leq n\|E_{Q}\|_{F}^{\frac{2}{m}},
\end{displaymath}
it follows that
\begin{displaymath}
\mathbb{D}_{2}\leq\sqrt{n\big(n-p+2\sqrt{n-p}\,\delta(E_{Q})+1\big)\|E_{Q}\|_{F}^{\frac{2}{m}}}\leq\sqrt{n}\big(\sqrt{n-p}+1\big)\|E_{Q}\|_{F}^{\frac{1}{m}}.
\end{displaymath}
On the other hand, if $\|E_{Q}\|_{F}\geq1$, then~\eqref{up1.1} reads
\begin{displaymath}
\mathbb{D}_{2}\leq\sqrt{n\big(\sqrt{n-p}+\delta(E_{Q})\big)^{2}+\frac{1}{n}|\tr(E)|^{2}}.
\end{displaymath}
Then
\begin{align*}
\mathbb{D}_{2}\leq\sqrt{n\big(n-p+2\sqrt{n-p}\,\delta(E_{Q})+\|E_{Q}\|_{F}^{2}\big)}\leq\sqrt{n}\big(\sqrt{n-p}+1\big)\|E_{Q}\|_{F}.
\end{align*}
Hence, the estimate~\eqref{up1.1} is sharper than~\eqref{Song}.
\end{remark}

The next two estimates are based on the different constraints for $E_{Q}$.

\begin{theorem}\label{main1.2}
Under the assumptions of Theorem~{\rm\ref{main1.1}}, it holds that
\begin{equation}\label{up1.2}
\mathbb{D}_{2}\leq
\begin{cases}
\sqrt{n\big(n-p+2\sqrt{n-p}\,\delta(E_{Q})+1\big)\delta(E_{Q})^{\frac{2}{m}}+\frac{1}{n}|\tr(E)|^{2}}, &\text{if $\delta(E_{Q})<1$},\\
\sqrt{n\big(\sqrt{n-p}+\delta(E_{Q})\big)^{2}+\frac{1}{n}|\tr(E)|^{2}}, &\text{if $\delta(E_{Q})\geq 1$}.
\end{cases}
\end{equation}
\end{theorem}

\begin{proof}
Take
\begin{displaymath}
\varepsilon=\begin{cases}
\delta(E_{Q})^{\frac{1}{m}}, &\text{if $\delta(E_{Q})<1$},\\
1, &\text{if $\delta(E_{Q})\geq 1$}.
\end{cases}
\end{displaymath}
Direct computation yields
\begin{displaymath}
\mathscr{V}\Big(\delta(E_{Q})^{\frac{1}{m}}\Big)=\big(n-p+2\sqrt{n-p}\,\delta(E_{Q})+1\big)\delta(E_{Q})^{\frac{2}{m}}+\frac{1}{n}|\tr(E)|^{2}.
\end{displaymath}
Similarly to Theorem~\ref{main1.1}, one can show that the estimate~\eqref{up1.2} holds.
\end{proof}

\begin{theorem}\label{main1.3}
Under the assumptions of Theorem~{\rm\ref{main1.1}}, it holds that
\begin{equation}\label{up1.3}
\mathbb{D}_{2}\leq
\begin{cases}
\sqrt{mn\Big(\frac{n-p+2\sqrt{n-p}\,\delta(E_{Q})}{m-1}\Big)^{1-\frac{1}{m}}\delta(E_{Q})^{\frac{2}{m}}+\frac{1}{n}|\tr(E)|^{2}}, &\text{if ${\rm C}_{1}$ holds},\\
\sqrt{n\big(\sqrt{n-p}+\delta(E_{Q})\big)^{2}+\frac{1}{n}|\tr(E)|^{2}}, &\text{if ${\rm C}_{2}$ holds},
\end{cases}
\end{equation}
where
\begin{align*}
&{\rm C}_{1}: n-p+2\sqrt{n-p}\,\delta(E_{Q})>(m-1)\delta(E_{Q})^{2},\\
&{\rm C}_{2}: n-p+2\sqrt{n-p}\,\delta(E_{Q})\leq(m-1)\delta(E_{Q})^{2}.
\end{align*}
\end{theorem}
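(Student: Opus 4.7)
The plan is to follow the same template as in Theorems~\ref{main1.1} and~\ref{main1.2}. Applying the normal-case bound~\eqref{Xu1.1} to $\Lambda=\diag(\lambda_{1},\ldots,\lambda_{n})$ and $T^{-1}Q^{-1}\widetilde{A}QT$ (which share the spectra $\{\lambda_{i}\}$ and $\{\widetilde{\lambda}_{i}\}$, respectively) and invoking the key estimate~\eqref{main-est}, one obtains
\begin{equation*}
\mathbb{D}_{2}\leq\sqrt{n\Phi(\varepsilon)-\tfrac{n-1}{n}|\tr(E)|^{2}}
\end{equation*}
for every $\varepsilon\in(0,1]$. Unlike Theorems~\ref{main1.1} and~\ref{main1.2}, which use the \emph{ad hoc} choices $\varepsilon=\|E_{Q}\|_{F}^{1/m}$ or $\varepsilon=\delta(E_{Q})^{1/m}$, the idea here is to pick the $\varepsilon$ that minimizes $\Phi$ exactly over $(0,1]$; this is what produces the sharper first branch of~\eqref{up1.3}.

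Set $a:=\delta(E_{Q})^{2}$ and $b:=n-p+2\sqrt{n-p}\,\delta(E_{Q})$, so that $\Phi(\varepsilon)=a\varepsilon^{2(1-m)}+b\varepsilon^{2}+\tfrac{1}{n}|\tr(E)|^{2}$. Differentiation yields a unique stationary point on $(0,\infty)$ at $\varepsilon_{*}^{2m}=(m-1)a/b$, and $\Phi$ is strictly convex (sum of a decreasing and an increasing convex power function in $\varepsilon^{2}$), so $\varepsilon_{*}$ is the global minimum. The condition $\varepsilon_{*}\leq 1$ is equivalent to $(m-1)a\leq b$, i.e.\ $\mathrm{C}_{1}$; under $\mathrm{C}_{2}$, instead, $\varepsilon_{*}\geq 1$, so $\Phi$ is decreasing on $(0,1]$ and its constrained minimum is attained at $\varepsilon=1$.

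Under $\mathrm{C}_{1}$, a short calculation gives $a\varepsilon_{*}^{2(1-m)}+b\varepsilon_{*}^{2}=m\,a^{1/m}\bigl(b/(m-1)\bigr)^{(m-1)/m}$, whence
\begin{equation*}
\Phi(\varepsilon_{*})=m\left(\frac{n-p+2\sqrt{n-p}\,\delta(E_{Q})}{m-1}\right)^{\!1-\frac{1}{m}}\!\delta(E_{Q})^{\frac{2}{m}}+\tfrac{1}{n}|\tr(E)|^{2},
\end{equation*}
and substituting into $\sqrt{n\Phi-\tfrac{n-1}{n}|\tr(E)|^{2}}$ delivers the first branch of~\eqref{up1.3}. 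Under $\mathrm{C}_{2}$, the optimal choice is $\varepsilon=1$, giving $\Phi(1)=(\sqrt{n-p}+\delta(E_{Q}))^{2}+\tfrac{1}{n}|\tr(E)|^{2}$ and hence the second branch (which, as expected, coincides with the large-$\|E_{Q}\|_{F}$ branches of~\eqref{up1.1} and~\eqref{up1.2}, since in that regime all three choices reduce to $\varepsilon=1$). The only mildly delicate step is correctly locating $\varepsilon_{*}$ relative to the admissible interval $(0,1]$ and handling the boundary case $(m-1)a=b$ consistently across the two branches, but both checks are immediate from the monotonicity of $\Phi$ noted above, so no genuine obstacle arises.
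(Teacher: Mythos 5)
Your proposal is correct and follows essentially the same route as the paper: both reduce to $\mathbb{D}_{2}\leq\sqrt{n\Phi(\varepsilon)-\tfrac{n-1}{n}|\tr(E)|^{2}}$ via \eqref{Xu1.1} and \eqref{main-est}, locate the stationary point $\varepsilon_{*}^{2m}=(m-1)\delta(E_{Q})^{2}/\big(n-p+2\sqrt{n-p}\,\delta(E_{Q})\big)$ by differentiating $\Phi$, and take $\varepsilon=\varepsilon_{*}$ under ${\rm C}_{1}$ and $\varepsilon=1$ under ${\rm C}_{2}$. The only difference is that the paper first dispatches the diagonalizable case $m=1$ separately (there $n=p$, the stationary-point formula is indeterminate, and ${\rm C}_{2}$ holds trivially), a degenerate case your convexity/monotonicity discussion glosses over but which is still covered by the $\varepsilon=1$ branch.
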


\begin{proof}
We first note that $A$ is diagonalizable if and only if $n=p$ (or $m=1$).

(i) If $A$ is diagonalizable, then $T=I_{n}$, $n=p$, and $m=1$. In this case, the estimate~\eqref{main-est} reduces to
\begin{displaymath}
\|Q^{-1}\widetilde{A}Q-\Lambda\|_{F}^{2}\leq\|E_{Q}\|_{F}^{2}.
\end{displaymath}
An application of Lemma~\ref{lem:normal} yields
\begin{equation}\label{diag}
\mathbb{D}_{2}\leq\sqrt{n\|Q^{-1}\widetilde{A}Q-\Lambda\|_{F}^{2}-\frac{n-1}{n}|\tr(E)|^{2}}\leq\sqrt{n\|E_{Q}\|_{F}^{2}-\frac{n-1}{n}|\tr(E)|^{2}}.
\end{equation}

(ii) If $A$ cannot be diagonalized, then $n>p$ and $m>1$. Direct calculation yields
\begin{displaymath}
\mathscr{V}'(\varepsilon)=2\varepsilon\bigg(n-p+2\sqrt{n-p}\,\delta(E_{Q})-\frac{(m-1)\delta(E_{Q})^{2}}{\varepsilon^{2m}}\bigg).
\end{displaymath}
Here, $\mathscr{V}'(\varepsilon)$ denotes the derivative of $\mathscr{V}(\varepsilon)$ with respect to $\varepsilon$. It is easy to check that
\begin{displaymath}
\begin{cases}
\mathscr{V}'(\varepsilon)>0, &\text{if $\varepsilon>\Big(\frac{(m-1)\delta(E_{Q})^{2}}{n-p+2\sqrt{n-p}\,\delta(E_{Q})}\Big)^{\frac{1}{2m}}$},\\
\mathscr{V}'(\varepsilon)<0, &\text{if $0<\varepsilon<\Big(\frac{(m-1)\delta(E_{Q})^{2}}{n-p+2\sqrt{n-p}\,\delta(E_{Q})}\Big)^{\frac{1}{2m}}$}.
\end{cases}
\end{displaymath}
Take
\begin{displaymath}
\varepsilon=\begin{cases}
\Big(\frac{(m-1)\delta(E_{Q})^{2}}{n-p+2\sqrt{n-p}\,\delta(E_{Q})}\Big)^{\frac{1}{2m}}, &\text{if $n-p+2\sqrt{n-p}\,\delta(E_{Q})>(m-1)\delta(E_{Q})^{2}$},\\
1, &\text{if $n-p+2\sqrt{n-p}\,\delta(E_{Q})\leq(m-1)\delta(E_{Q})^{2}$}.
\end{cases}
\end{displaymath}
Direct computation yields
\begin{align*}
\mathscr{V}\Bigg(\bigg(\frac{(m-1)\delta(E_{Q})^{2}}{n-p+2\sqrt{n-p}\,\delta(E_{Q})}\bigg)^{\frac{1}{2m}}\Bigg)&=m\bigg(\frac{n-p+2\sqrt{n-p}\,\delta(E_{Q})}{m-1}\bigg)^{1-\frac{1}{m}}\delta(E_{Q})^{\frac{2}{m}}\\
&\quad+\frac{1}{n}|\tr(E)|^{2}.
\end{align*}
The rest of this proof is similar to Theorem~\ref{main1.1}.
\end{proof}

\begin{remark}\rm
If $A$ is diagonalizable, the condition ${\rm C}_{2}$ will be satisfied. From~\eqref{up1.3}, we have
\begin{displaymath}
\mathbb{D}_{2}\leq\sqrt{n\,\delta(E_{Q})^{2}+\frac{1}{n}|\tr(E)|^{2}},
\end{displaymath}
which coincides with~\eqref{diag}. That is,~\eqref{up1.3} has contained the diagonalizable case.
\end{remark}

\begin{remark}\rm
In particular, if $A$ is normal, then $Q$ can be chosen as a unitary matrix. In this case, the estimates~\eqref{up1.1}--\eqref{up1.3} all reduce to
\begin{displaymath}
\mathbb{D}_{2}\leq\sqrt{n\,\delta(E)^{2}+\frac{1}{n}|\tr(E)|^{2}},
\end{displaymath}
which is exactly~\eqref{Xu1}.
\end{remark}

\begin{example}
Let
\begin{displaymath}
A=\begin{pmatrix}
a+b\mathbf{i} & 1 & 0 \\
0 & a+b\mathbf{i} & 0 \\
0 & 0 & a+b\mathbf{i}
\end{pmatrix} \quad \text{and} \quad E=\begin{pmatrix}
0.0098 & 0 & 0 \\
0 & 0.01 & 0 \\
0 & 0 & 0.0102
\end{pmatrix},
\end{displaymath}
where $a\in\mathbb{R}$, $b\in\mathbb{R}$, and $\mathbf{i}=\sqrt{-1}$. In this case,
\begin{displaymath}
\mathbb{D}_{2}\equiv\sqrt{(0.0098)^{2}+(0.01)^{2}+(0.0102)^{2}}\approx 0.017322817323.
\end{displaymath}
The upper bounds in~\eqref{Song}, \eqref{up1.1}, \eqref{up1.2}, and~\eqref{up1.3} are listed below.
\begin{table}[!htbp]
\centering
\begin{tabular}{@{} cc @{}}
\toprule
\ \ \texttt{Estimate}  &  \qquad \texttt{Upper bound for $\mathbb{D}_{2}$} \ \ \\
\midrule
\ \ \eqref{Song}  &  \qquad 0.455931801780 \ \ \\
\ \ \eqref{up1.1}  &  \qquad 0.228717520806 \ \ \\
\ \ \eqref{up1.2}  &  \qquad 0.044693805777 \ \ \\
\ \ \eqref{up1.3}  &  \qquad 0.044693805018 \ \ \\
\bottomrule
\end{tabular}
\medskip
\caption{\small The upper bounds in~\eqref{Song} and~\eqref{up1.1}--\eqref{up1.3}.}
\label{tab:complex}
\end{table}

\vskip -0.5cm

Table~\ref{tab:complex} displays that the new upper bounds in~\eqref{up1.1}--\eqref{up1.3} are smaller than that in~\eqref{Song}.
\end{example}

Under the assumptions of Lemma~\ref{lem:normal}, if the original matrix is Hermitian, then the following estimate (see~\cite[Theorem~4.2]{Xu2017}) holds:
\begin{equation}\label{Xu2}
\mathbb{D}_{2}\leq\sqrt{\|E\|_{F}^{2}+\delta(E)^{2}}.
\end{equation}
In what follows, we consider a special case that the eigenvalues of $A$ are all real. In such a case, we can derive more accurate estimates for $\mathbb{D}_{2}$ based on~\eqref{Xu2}, which are presented in the following three theorems.

\begin{theorem}\label{main2.1}
Let $A\in\mathbb{C}^{n\times n}$ be factorized as in~\eqref{Jordan}, and let $\widetilde{A}=A+E\in\mathbb{C}^{n\times n}$ be a perturbed matrix with spectrum $\{\widetilde{\lambda}_{i}\}_{i=1}^{n}$. If the eigenvalues $\{\lambda_{i}\}_{i=1}^{n}$ of $A$ are all real, then there exists a permutation $\pi$ of $\{1,\ldots,n\}$ such that
\begin{equation}\label{up2.1}
\mathbb{D}_{2}\leq
\begin{cases}
\sqrt{2\Big(n-p+2\sqrt{n-p}\,\delta(E_{Q})+\frac{\delta(E_{Q})^{2}}{\|E_{Q}\|_{F}^{2}}\Big)\|E_{Q}\|_{F}^{\frac{2}{m}}+\frac{1}{n}|\tr(E)|^{2}}, &\text{if $\|E_{Q}\|_{F}<1$},\\
\sqrt{2\big(\sqrt{n-p}+\delta(E_{Q})\big)^{2}+\frac{1}{n}|\tr(E)|^{2}}, &\text{if $\|E_{Q}\|_{F}\geq 1$}.
\end{cases}
\end{equation}
\end{theorem}

\begin{theorem}\label{main2.2}
Under the assumptions of Theorem~{\rm\ref{main2.1}}, it holds that
\begin{equation}\label{up2.2}
\mathbb{D}_{2}\leq
\begin{cases}
\sqrt{2\big(n-p+2\sqrt{n-p}\,\delta(E_{Q})+1\big)\delta(E_{Q})^{\frac{2}{m}}+\frac{1}{n}|\tr(E)|^{2}}, &\text{if $\delta(E_{Q})<1$},\\
\sqrt{2\big(\sqrt{n-p}+\delta(E_{Q})\big)^{2}+\frac{1}{n}|\tr(E)|^{2}}, &\text{if $\delta(E_{Q})\geq 1$}.
\end{cases}
\end{equation}
\end{theorem}

\begin{theorem}\label{main2.3}
Under the assumptions of Theorem~{\rm\ref{main2.1}}, it holds that
\begin{equation}\label{up2.3}
\mathbb{D}_{2}\leq
\begin{cases}
\sqrt{2m\Big(\frac{n-p+2\sqrt{n-p}\,\delta(E_{Q})}{m-1}\Big)^{1-\frac{1}{m}}\delta(E_{Q})^{\frac{2}{m}}+\frac{1}{n}|\tr(E)|^{2}}, &\text{if ${\rm C}_{1}$ holds},\\
\sqrt{2\big(\sqrt{n-p}+\delta(E_{Q})\big)^{2}+\frac{1}{n}|\tr(E)|^{2}}, &\text{if ${\rm C}_{2}$ holds},
\end{cases}
\end{equation}
where ${\rm C}_{1}$ and ${\rm C}_{2}$ are given in Theorem~{\rm\ref{main1.3}}.
\end{theorem}

\begin{example}
Let
\begin{displaymath}
A=\begin{pmatrix}
\lambda & 1 & 0 \\
0 & \lambda & 1 \\
0 & 0 & \lambda
\end{pmatrix} \quad \text{and} \quad E=\begin{pmatrix}
0.099 & 0 & 0 \\
0 & 0.101 & -0.0001 \\
0 & 0 & 0.1
\end{pmatrix},
\end{displaymath}
where $\lambda\in\mathbb{R}$. In this example, it holds that
\begin{displaymath}
\mathbb{D}_{2}\equiv\sqrt{(0.099)^{2}+(0.101)^{2}+(0.1)^{2}}\approx 0.173210854163.
\end{displaymath}
The upper bounds in~\eqref{Song}, \eqref{up2.1}, \eqref{up2.2}, and~\eqref{up2.3} are listed below.
\begin{table}[!htbp]
\centering
\begin{tabular}{@{} cc @{}}
\toprule
\ \ \texttt{Estimate}  &  \qquad \texttt{Upper bound for $\mathbb{D}_{2}$} \ \ \\
\midrule
\ \ \eqref{Song}  &  \qquad 2.330923594272 \ \ \\
\ \ \eqref{up2.1}  &  \qquad 1.129360191939 \ \ \\
\ \ \eqref{up2.2}  &  \qquad 0.325303334100 \ \ \\
\ \ \eqref{up2.3}  &  \qquad 0.325303282160 \ \ \\
\bottomrule
\end{tabular}
\medskip
\caption{\small The upper bounds in~\eqref{Song} and~\eqref{up2.1}--\eqref{up2.3}.}
\label{tab:real}
\end{table}

\vskip -0.5cm

From Table~\ref{tab:real}, one can see that the new estimates~\eqref{up2.1}--\eqref{up2.3} are sharper than~\eqref{Song}.
\end{example}

\begin{remark}
Define
\begin{displaymath}
\kappa_{2}(Q):=\|Q^{-1}\|_{2}\|Q\|_{2} \quad \text{and} \quad \mathbb{D}_{\infty}:=\max_{1\leq i\leq n}\big|\widetilde{\lambda}_{\pi(i)}-\lambda_{i}\big|.
\end{displaymath}
Using
\begin{displaymath}
\|E_{Q}\|_{F}\leq\min\Big\{\sqrt{{\rm rank}(E)}\|E_{Q}\|_{2},\,\kappa_{2}(Q)\|E\|_{F}\Big\},
\end{displaymath}
one can derive some deductive estimates for $\mathbb{D}_{2}$. Furthermore, using the relation $\mathbb{D}_{\infty}\leq\mathbb{D}_{2}$, one can readily obtain the corresponding estimates for $\mathbb{D}_{\infty}$.
\end{remark}

\bibliographystyle{amsplain}

\end{document}